\newtheorem{theorem}{Theorem}[section]
\newtheorem{construction}[theorem]{Construction}
\newtheorem{remark}{Remark}[section]
\newtheorem{corollary}[theorem]{Corollary}
\newtheorem{lemma}[theorem]{Lemma}
\newtheorem{conjecture}[theorem]{Conjecture}
\newtheorem*{definition*}{Definition}
\begin{document}

\title{Product of sets on varieties in finite fields}
\author{Che-Jui Chang\and Ali Mohammadi\and Thang Pham \and Chun-Yen Shen}
\maketitle

\begin{abstract} 
Let $V$ be a variety in $\mathbb{F}_q^d$ and $E\subset V$. It is known that if any line passing through the origin contains a bounded number of points from $E$, then $|\prod(E)|=|\{x\cdot y\colon x, y\in E\}|\gg q$ whenever $|E|\gg q^{\frac{d}{2}}$. In this paper, we show that the barrier $\frac{d}{2}$ can be broken when $V$ is a paraboloid in some specific dimensions. The main novelty in our approach is to link this question to the distance problem in one lower dimensional vector space, allowing us to use recent developments in this area to obtain improvements.
\end{abstract}



\maketitle 
\section{Introduction}
Let $\mathbb{F}_q$ be a finite field of order $q$, where $q$ is a prime power. For $E, F\subset \mathbb{F}_q^d$, the set of dot products between $E$ and $F$ is defined by 
\[\prod(E, F):=\{x\cdot y\colon x\in E, y\in F\}\subset\mathbb{F}_q.\]
When $E=F$, we write $\prod(E)$ instead of $\prod(E, F)$. In \cite{hart}, Hart, Iosevich, Koh, and Rudnev studied the question of finding the smallest exponent $\alpha$ such that if $|E||F|\gg q^{\alpha}$, then $|\prod(E, F)|\gg q$. Here and throughout the paper, we use the notation $X\gg Y$ if there exists an absolute constant $c>0$ such that $X\ge cY$. 

By using discrete Fourier analysis, they proved the following result. 
\begin{theorem}[Hart-Iosevich-Koh-Rudnev, \cite{hart}]
Let $E$ be a set in $\mathbb{F}_q^d$. Suppose that $|E|>q^{\frac{d+1}{2}}$, then 
\[\mathbb{F}_q\setminus \{0\}\subset \prod(E).\]
Moreover, this result is sharp in the following sense: 
\begin{enumerate}
    \item If $\mathbb{F}_q$ is a quadratic extension, for any $\epsilon>0$, there exists $E\subset \mathbb{F}_q^d$ of size $q^{\frac{d+1}{2}-\epsilon}$ such that $|\prod(E)|=o(q)$. 
    \item If $d\equiv 3\mod 4$ and $q$ is large enough, then for any $t\ne 0$, there exists $E\subset \mathbb{F}_q^d$ of size about $q^{\frac{d+1}{2}}$ such that $t\not\in \prod(E)$.
\end{enumerate}
\end{theorem}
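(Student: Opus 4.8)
The plan is to fix a target $t\in\Fq\setminus\{0\}$ and show that the number of representations $N(t)=|\{(x,y)\in E\times E\colon x\cdot y=t\}|$ is strictly positive once $|E|>q^{\frac{d+1}{2}}$; since $t$ is arbitrary, this yields $\Fq\setminus\{0\}\subset\prod(E)$. First I would open up the indicator of the condition $x\cdot y=t$ by orthogonality of additive characters, writing $\mathbf 1[x\cdot y=t]=q^{-1}\sum_{s\in\Fq}\chi(s(x\cdot y-t))$ for a fixed nontrivial additive character $\chi$. Separating the contribution of $s=0$ isolates the expected main term $|E|^2/q$ and leaves the error
\[\mathcal E(t)=\frac1q\sum_{s\ne 0}\chi(-st)\,T_s,\qquad T_s=\sum_{x,y\in E}\chi(s\,x\cdot y).\]
The whole theorem then reduces to proving $|\mathcal E(t)|<|E|^2/q$ under the stated hypothesis.

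The clean way to organise the error is spectral. For $t\ne 0$, consider the dot-product graph $G_t$ on the vertex set $\Fq^d\setminus\{0\}$ in which $x\sim y$ exactly when $x\cdot y=t$; this graph is regular of degree $q^{d-1}$, and its adjacency operator decomposes as $A_t=\frac1q J+\frac1q\sum_{s\ne 0}\chi(-st)M_s$, where $J$ is the all-ones matrix and $M_s$ has entries $\chi(s\,x\cdot y)$. Since $N(t)=\mathbf 1_E^{\top}A_t\,\mathbf 1_E$, the term $\frac1q J$ reproduces the main term $|E|^2/q$, while $\mathcal E(t)=\mathbf 1_E^{\top}\bigl(A_t-\tfrac1q J\bigr)\mathbf 1_E$. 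Thus the task becomes bounding the nontrivial spectrum of $G_t$, i.e.\ the operator norm of $A_t-\frac{1}{q}J=\frac1q\sum_{s\ne 0}\chi(-st)M_s$. Each $M_s$ is $q^{d/2}$ times a dilated discrete Fourier matrix, hence $q^{d/2}$ times a unitary, so the triangle inequality over $s$ gives only a bound of order $q^{d/2}$; fed into the expander mixing lemma $\bigl|N(t)-|E|^2/q\bigr|\le\lambda|E|$ this would merely reproduce the easy threshold $q^{\frac{d+2}{2}}$. The entire point is that there is square-root cancellation among the $M_s$.

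The main obstacle, and the step I would spend the most effort on, is therefore the eigenvalue estimate $\lambda\ll q^{\frac{d-1}{2}}$ for the nontrivial eigenvalues of $G_t$. I would obtain it by evaluating $T_s$ through the Gauss sum of the nondegenerate quadratic form $x\cdot y$: after the substitution $u=x+y,\ v=x-y$ (in odd characteristic) the form splits as $x\cdot y=\frac14(\|u\|-\|v\|)$, a difference of two norm forms, each contributing a one-variable quadratic Gauss sum of modulus $q^{1/2}$, and it is the combination of these with the phases $\chi(-st)$ that produces the saving over the trivial estimate. The delicate part is that these Gauss sums carry the quadratic character $\eta$ and the sign $\eta(-1)$, so the cancellation is genuine only after all phases are summed; this is exactly where the residue $d\bmod 4$ (together with whether $-1$ is a square in $\Fq$) enters, and where degenerate configurations such as totally isotropic subspaces — for which $T_s$ is constant in $s$ and every nonzero product is avoided — must be verified not to violate the bound once $|E|>q^{\frac{d+1}{2}}$ forces $E$ off any such subspace. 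Granting $\lambda\ll q^{\frac{d-1}{2}}$, the mixing lemma gives $\bigl|N(t)-|E|^2/q\bigr|\ll q^{\frac{d-1}{2}}|E|$, which is $<|E|^2/q$ precisely when $|E|>q^{\frac{d+1}{2}}$, completing the first assertion.

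For the two sharpness statements I would argue by explicit construction. For (1), over a quadratic extension $\Fq=\mathbb F_{q_0^2}$ with $q_0=\sqrt q$, I would build $E$ from the subfield $\mathbb F_{q_0}$ so that the bilinear form takes values in a set of size $O(\sqrt q)=o(q)$, while still allowing $|E|$ as large as $q^{\frac{d+1}{2}-\epsilon}$; then $|\prod(E)|=o(q)$. For (2), when $d\equiv 3\bmod 4$ the sign of the relevant $d$-fold quadratic Gauss sum obstructs one prescribed value: I would place $E$ on a suitable sphere (or a subset thereof) of size $\asymp q^{\frac{d+1}{2}}$ for which the exact Gauss-sum evaluation forces $N(t)=0$ for some $t\ne 0$, so that $t\notin\prod(E)$. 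This dovetails with the spectral analysis above, since it is exactly the tightness of $\lambda\asymp q^{\frac{d-1}{2}}$ in this residue class that lets the error term cancel the main term.
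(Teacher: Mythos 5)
This statement is quoted by the paper from Hart--Iosevich--Koh--Rudnev \cite{hart} without proof, so there is no internal argument to compare against; I can only assess your proposal on its own terms. Your overall architecture is the standard one and is sound: orthogonality of characters to isolate the main term $|E|^2/q$, a spectral bound $\lambda\ll q^{\frac{d-1}{2}}$ for the dot-product graph $G_t$, and the expander mixing lemma to conclude, which does give the threshold $q^{\frac{d+1}{2}}$. You also correctly identify that the triangle inequality over $s$ only yields $q^{\frac{d+2}{2}}$ and that the whole content is square-root cancellation in the $s$-sum.

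The genuine gap is in your proposed mechanism for the key estimate $\lambda\ll q^{\frac{d-1}{2}}$. Evaluating $T_s$ ``through the Gauss sum of $x\cdot y$'' after the substitution $u=x+y$, $v=x-y$ does not do the job: the complete sum $\sum_{x,y\in\mathbb{F}_q^d}\chi(s\,x\cdot y)$ equals $q^d$ trivially by summing in $y$ first (no Gauss sums needed), and knowing it tells you nothing about the operator norm of $\sum_{s\ne0}\chi(-st)M_s$, since the $M_s$ do not commute and cannot be simultaneously diagonalized by splitting the form. If you instead push the Gauss-sum idea through the Fourier transform of the quadric $\{x\cdot y=t\}\subset\mathbb{F}_q^{2d}$, you end up with Kloosterman sums $\sum_{s\ne0}\chi(-st-m\cdot m'/s)$ of size $O(\sqrt q)$, and the resulting bound, after Cauchy--Schwarz over the frequencies, only gives $|E|\gg q^{d+\frac12}$ --- worse than the target. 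The cancellation actually comes from a $TT^*$ computation: $(M_sM_{s'}^*)_{x,x'}=q^d\mathbf{1}[sx=s'x']$, so after Cauchy--Schwarz in $x$ and orthogonality one is reduced to pairs $y'=\mu y$ on a common line through the origin, where $\sum_{s'\ne0}\chi(-s'(\mu-1)t)=-1$ for $\mu\ne1$ precisely because $t\ne0$; equivalently, one counts common neighbours in $G_t$ via intersections of hyperplanes. This elementary step, not a Gauss-sum evaluation, is what produces $\lambda\ll q^{\frac{d-1}{2}}$. Separately, your sharpness items (1) and (2) are only gestured at: you would still need the actual constructions (a subfield-based set for (1), and for (2) a set built from mutually orthogonal isotropic directions, as in Lemma 5.1 of \cite{hart}, for which the parity condition $d\equiv 3\bmod 4$ is what makes the construction possible), since ``the exact Gauss-sum evaluation forces $N(t)=0$'' is a hope rather than an argument.
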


It is natural to ask under what additional conditions, the exponent $\frac{d+1}{2}$ can be improved if we are only interested in a positive proportion of all elements in the field. In the same paper, Hart et al. showed that when $E$ is a subset of the unit sphere, then the exponent $\frac{d}{2}$ is enough. This result can be extended for general sets $E\subset \mathbb{F}_q^d$ whenever $E$ does not contain many points on any lines through the origin. We refer the reader to \cite[Section 3.1]{hart} and \cite[Theorem 1.3]{PV} for more details and discussions. To the best of our knowledge, no improvement of $d/2$ has been made in the literature for spheres or other varieties. 

In this paper, we are interested in finding varieties $V$ for which the threshold $\frac{d}{2}$ can be improved. It follows from our main theorem (Theorem \ref{th:main}) that paraboloids in some specific dimensions provide the first model for this type question. The main novelty in our approach is to link this question to the distance problem in one lower dimensional vector space, allowing us to use recent developments in this area to obtain improvements. To state our main theorems, we need to recall some notations from Fourier restriction theory. 

Let $(\mathbb{F}_q^d, dx)$ be the $d$-dimensional vector space over $\mathbb{F}_q$ endowed with the normalized counting measure $dx$, and $(\mathbb{F}_q^d, dc)$ be the dual space with the counting measure $dc$. For complex-valued functions $f\colon (\mathbb{F}_q^d, dx)\to \mathbb{C}$ and $g\colon (\mathbb{F}_q^d, dc)\to \mathbb{C}$, we define 
\[\int f(x)dx:=q^{-d}\sum_{x\in \mathbb{F}_q^d}f(x),\quad ~\int g(c)dc:=\sum_{c\in \mathbb{F}_q^d}g(c).\]
Let $V$ be an algebraic variety in $(\mathbb{F}_q^d, dx)$, we define the normalized surface measure $d\sigma$ on $V$ by 
\[d\sigma(x):=q^d|V|^{-1}1_V(x)dx.\]
So, for any function $f\colon V\to \mathbb{C}$, 
\[\int f(x)d\sigma(x):=|V|^{-1}\sum_{x\in V}f(x).\]
For a function $f\colon (\mathbb{F}_q^d, dx)\to \mathbb{C}$, the Fourier transform $\widehat{f}$ is defined on the space $(\mathbb{F}_q^d, dc)$ by
\[\widehat{f}(c):=\int \chi(-x\cdot c)f(x)dx=q^{d}\sum_{x\in \mathbb{F}_q^d}\chi(-x\cdot c)f(x), ~c\in (\mathbb{F}_q^d, dc).\]
Similarly, for a function $g\colon (\mathbb{F}_q^d, dc)\to \mathbb{C}$, its Fourier transform is defined on the space $(\mathbb{F}_q^d, dx)$ by
\[\widehat{g}(x):=\int \chi(-x\cdot c)g(c)dc=\sum_{c\in \mathbb{F}_q^d}\chi(-x\cdot c)g(c).\]

With the normalized surface measure $d\sigma$ on $V$ and a function $f\colon (\mathbb{F}_q^d, dx)\to \mathbb{C}$, we define the inverse Fourier transform $(fd\sigma)^\vee$ of the measure $fd\sigma$ by 
\[(fd\sigma)^\vee(c):=\int \chi(c\cdot x)fd\sigma(x)=|V|^{-1}\sum_{x\in V}\chi(c\cdot x)f(x),\]
for $c\in (\mathbb{F}_q^d, dc)$.

The $L^2 \to L^r$ extension problem for the variety $V$ is to determine  all ranges of $r$ such that the following inequality 
\begin{equation}\label{defR}||fd\sigma^\vee||_{L^{r}(\mathbb{F}_q^d, dc)}\le C ||f||_{L^2(V, d\sigma)}\end{equation}
holds for any function $f$ on $V$. We note that in the above inequality, the constant $C$ is independent of $q$ (the size of $\mathbb{F}_q$). There is a series of papers studying $L^2\to L^r$ estimates for various varieties in the literature, for instance, see \cite{IK, Lew, MT, RS18} and the references therein. In this paper, we require estimates associated to spheres of non-zero radius. 

For a positive integer $d\ge 3$ and a non-zero element $r\in \mathbb{F}_q$, the paraboloid $P_d$ and the sphere $S_r$ centered at origin of radius $r$ in $\mathbb{F}_q^d$ are defined by the following formulas:
\[P_d:=\left\lbrace x=(x_1, \ldots, x_d)\colon x_d=x_1^2+\cdots+x_{d-1}^2 \right\rbrace,\]
and 
\[S_r:=\left\lbrace x=(x_1, \ldots, x_d)\colon x_1^2+\cdots+x_d^2=r\right\rbrace.\]
Our main result is as follows. 
\begin{theorem}\label{th:main}
Let $E$ be a set in $P_d$ with $d\equiv 3\mod 4$ and $q\equiv 3\mod 4$. Assume that the extension conjecture 
\[||fd\sigma^\vee||_{L^\frac{2d+2}{d-1}(\mathbb{F}_q^{d-1}, dc)}\ll ||f||_{L^2(S_r, d\sigma)},\]
holds for any $S_r\subset \mathbb{F}_q^{d-1}$ and $r\ne 0$, then we have 
\[|\prod(E)|\gg q,\]
whenever $|E|\gg q^{\frac{(d-1)^2+2(d-1)}{2(d-1)+2}}=q^{\frac{d}{2}-\frac{(d+1)}{2(d-1)+2}}$.
\end{theorem}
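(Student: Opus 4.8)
The plan is to run a fourth-moment (energy) argument for the dot-product set and to convert the bilinear phase on the paraboloid into a distance on spheres one dimension down, where the assumed extension estimate can be used. Write each point of $E\subset P_d$ as $x=(x',|x'|^2)$ with $x'\in\mathbb{F}_q^{d-1}$, and set $A=\{x'\colon (x',|x'|^2)\in E\}$, so $|A|=|E|$; for $x,y\in E$ one has $x\cdot y = x'\cdot y' + |x'|^2|y'|^2$. Let $\mu(u)$ be the number of pairs $(x,y)\in E^2$ with $x\cdot y=u$. Cauchy--Schwarz gives $|\prod(E)|\ge |E|^4/\sum_u\mu(u)^2$, and expanding $\mu$ over the additive characters $\chi$ shows that $\sum_u\mu(u)^2=|E|^4/q+\tfrac1q\sum_{s\neq0}|T(s)|^2$ with $T(s)=\sum_{x,y\in E}\chi(s\,x\cdot y)$. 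Since the diagonal produces the main term, the whole problem reduces to the single family of complete sums
\[\sum_{s\neq 0}\Big|\sum_{x',y'\in A}\chi\big(s(x'\cdot y'+|x'|^2|y'|^2)\big)\Big|^2 \ll |E|^4.\]

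The key structural observation---the promised link to distances in $\mathbb{F}_q^{d-1}$---is that on each pair of level sets $a=|x'|^2$, $b=|y'|^2$ the phase linearizes: since $x'\cdot y'=\tfrac12(a+b-\|x'-y'\|^2)$ for $x'\in S_a$, $y'\in S_b$, where $\|x'-y'\|^2:=\sum_{i=1}^{d-1}(x_i-y_i)^2$, the quantity $x\cdot y$ is an affine function of the squared distance $\|x'-y'\|^2$. Slicing $A$ by the value of $|x'|^2$ thus turns each inner sum into a weighted distance sum between subsets of spheres $S_a,S_b\subset\mathbb{F}_q^{d-1}$, and after pigeonholing onto a popular slice of nonzero radius $r$ (the degenerate slice $r=0$ being treated separately) the task becomes a distance-energy bound for a single set $A_r\subset S_r$ with $|A_r|\gg|E|/q$. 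Concretely, I would reduce to showing that the number of quadruples in $A_r$ with $\|x'-y'\|^2=\|z'-w'\|^2$---equivalently $x'\cdot y'=z'\cdot w'$, since all four points lie on $S_r$---is $\ll|A_r|^4/q$ once $|A_r|$ exceeds the stated threshold, which then yields $\gg q$ distinct distances, hence $\gg q$ distinct dot products.

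To bound this distance energy I would expand the sphere indicator $1_{S_t}$ in additive characters and recognize the resulting exponential sums as values of the inverse Fourier transform $(1_{A_r}d\sigma)^\vee$ of the counting measure on $S_r$. Writing the off-diagonal contribution as a Gauss-sum-weighted average of $|(1_{A_r}d\sigma)^\vee|^2$ over concentric spheres, one inserts the assumed estimate $\|(fd\sigma)^\vee\|_{L^{(2d+2)/(d-1)}}\ll\|f\|_{L^2(S_r,d\sigma)}$, which bounds $\|(1_{A_r}d\sigma)^\vee\|_{L^{(2d+2)/(d-1)}}$ by $(|A_r|/|S_r|)^{1/2}$. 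The hypotheses $q\equiv3$ and $d\equiv3\pmod4$ enter at this point to fix $|S_r|\sim q^{d-2}$ and the signs of the Gauss sums attached to the anisotropic form $x_1^2+\cdots+x_{d-1}^2$, so that the spheres are genuinely nondegenerate. Collecting the powers of $q$, of $|A_r|\sim|E|/q$, and of $|S_r|$, and optimizing, one is led to the exponent $\tfrac{(d-1)^2+2(d-1)}{2(d-1)+2}$.

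The step I expect to be genuinely delicate is the application of the extension estimate. The naive route---using Cauchy--Schwarz to split the bilinear sum $\sum_{x',y'}$ as $\sum_{x'}\big(\sum_{y'}\big)$ and only then inserting the $L^{(2d+2)/(d-1)}$ bound---concedes exactly the gain of the extension estimate through an extraneous factor $|A_r|^{1/2}$ and merely reproduces the trivial threshold $q^{d/2}$, i.e.\ the Hart--Iosevich--Koh--Rudnev exponent after accounting for the $q$-fold loss of the slicing. The improvement must instead come from treating $|(1_{A_r}d\sigma)^\vee|^2$ as a function of the radius and exploiting how its $\ell^{(2d+2)/(d-1)}$ mass is distributed across the $\sim q$ concentric spheres, which is precisely the information the extension inequality encodes. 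Extracting this gain without reintroducing the $|A_r|^{1/2}$ factor, while simultaneously absorbing the loss incurred in passing to a single slice, is the heart of the matter and the part of the argument that forces the nontrivial dimensional constraints.
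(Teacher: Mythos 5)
Your overall philosophy --- converting the dot product on the paraboloid into a distance-type quantity in $\mathbb{F}_q^{d-1}$ and feeding it to the sphere extension estimate --- is the right one, but the specific reduction you propose does not reach the stated exponent, and you have left the decisive step unresolved. The problem is the slicing. After decomposing $A$ by the value of $||x'||$ and pigeonholing onto a popular nonzero radius, you are left with a set $A_r\subset S_r$ of size only about $|E|/q$, and you then need its distance energy to be $\ll |A_r|^4/q$. Any bound of the strength available here (including the one the paper proves, which gives an isosceles-triangle count $\ll |X|^3/q$ only once $|X|\gg q^{(d^2-1)/(2d)}$ in $\mathbb{F}_q^{d-1}$) would then require $|E|/q\gg q^{(d^2-1)/(2d)}$, i.e.\ $|E|\gg q^{1+(d^2-1)/(2d)}$, which is far above the claimed threshold $q^{(d^2-1)/(2d)}$ and even above the trivial $q^{d/2}$. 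You acknowledge this --- ``absorbing the loss incurred in passing to a single slice is the heart of the matter'' --- but that loss is not absorbable along these lines; it is not a delicate technical point but a structural defect of the slicing reduction. (A secondary issue: restricting to one slice only controls dot products of pairs inside that slice, whereas $\prod(E)$ ranges over all pairs, so even a successful single-slice bound would be proving a weaker statement than needed for the energy $\sum_u\mu(u)^2$.)

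The idea you are missing is the paper's inversion trick, which removes the need to slice at all. The paper works with the third moment $D(E)=\#\{(x,y,z)\in E^3: x\cdot y=x\cdot z\}$ (showing first that the fourth moment is at most $|E|\,|D(E)|$), and observes that for $x,y,z\in P_d$ the equation $x\cdot y=x\cdot z$ is \emph{exactly} the condition
\[
\left\lVert \frac{-\overline{x}}{2||\overline{x}||}-\overline{y}\right\rVert=\left\lVert \frac{-\overline{x}}{2||\overline{x}||}-\overline{z}\right\rVert ,
\]
i.e.\ an isosceles triangle in $\mathbb{F}_q^{d-1}$ with apex at the single inverted point $-\overline{x}/(2||\overline{x}||)$ and base $\overline{y},\overline{z}$. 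Thus $D(E)$ is bounded by the number of isosceles triangles in $X=E'\cup F'\subset\mathbb{F}_q^{d-1}$ with no pigeonholing and no loss. The spheres to which the extension hypothesis is applied then arise in \emph{frequency} space: averaging the pinned count $\sum_{x,z:\,||x-y||=||z-y||}1$ over the orthogonal group and applying Fourier inversion produces $q^n\sum_{r}\bigl|\sum_{m\in S_r}\widehat{X}(m)\chi(y\cdot m)\bigr|^2$, each summand of which is $|S_r|^2|(fd\sigma)^\vee(y)|^2$ for $f=\widehat{X}|_{S_r}$; summing over $y\in X$ and using H\"older plus the $L^2\to L^{(2n+4)/n}$ bound (with $n=d-1$, and Theorem \ref{zerosphere} for the zero-radius sphere, which is where $d\equiv 3$ and $q\equiv 3 \bmod 4$ enter) yields $q^{n-1}|X|^{(n+4)/(n+2)}$, and hence the threshold $q^{n(n+2)/(2n+2)}=q^{(d^2-1)/(2d)}$. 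Your level-set identity $x'\cdot y'=\tfrac12(a+b-\|x'-y'\|^2)$ is a genuine observation, but it is the wrong linearization here: the apex of the relevant isosceles configuration must depend on $x$ alone, not on the pair of radii.
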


It is worth noting that the same conclusion does not hold when $d$ is even. When $d\equiv 3\mod 4$ and $q\equiv 3\mod 4$, we conjecture that the sharp exponent should be $(d-1)/2$. To support these claims, we provide constructions in the last section.

\begin{corollary}\label{cor1.3}
Let $E$ be a set in $P_3 \subset \mathbb{F}_q^3$ with $q\equiv 3\mod 4$. Suppose that $|E|\gg q^{\frac{3}{2}-\frac{1}{6}}$, then
\[|\prod(E)|\gg q.\]
\end{corollary}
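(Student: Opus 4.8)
The plan is to derive the corollary as the $d=3$ instance of Theorem \ref{th:main}, so that the whole task reduces to showing that the extension hypothesis assumed there is, for $d=3$, an unconditional theorem. First I note that $d=3$ satisfies $d\equiv 3\bmod 4$, so the theorem applies verbatim once its hypothesis is discharged, and that its size threshold specializes correctly: for $d=3$ one has $\frac{(d-1)^2+2(d-1)}{2(d-1)+2}=\frac{8}{6}=\frac{4}{3}=\frac{3}{2}-\frac{1}{6}$, which is exactly the exponent stated in the corollary.

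For $d=3$ the relevant exponent degenerates favourably, since $\frac{2d+2}{d-1}=\frac{8}{2}=4$. Hence the conjectural estimate required by Theorem \ref{th:main} becomes
\[
\|fd\sigma^\vee\|_{L^{4}(\mathbb{F}_q^{2}, dc)}\ll \|f\|_{L^2(S_r, d\sigma)},\qquad r\ne 0,
\]
namely the classical $L^2\to L^4$ extension estimate for circles $S_r\subset\mathbb{F}_q^2$ of nonzero radius. This is precisely where the conditional nature of the main theorem disappears: at $d=3$ the exponent $\frac{2d+2}{d-1}$ lands on the $L^4$ endpoint, which is provable for planar circles (see \cite{IK, MT}), whereas for $d>3$ the corresponding estimate in $\mathbb{F}_q^{d-1}$ is only conjectural.

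To establish this $L^2\to L^4$ estimate I would run the standard reduction via Plancherel, which expands $\|fd\sigma^\vee\|_{L^4}^4$ through the additive structure of $S_r$ and reduces the bound (for $f\equiv 1$, with routine modifications for general $f$) to the additive energy estimate
\[
\mathcal{E}(S_r):=\bigl|\{(x,y,z,w)\in S_r^4\colon x+y=z+w\}\bigr|\ll q^2\approx|S_r|^2.
\]
The key geometric input is that for a fixed nonzero $t$, a pair $x,y\in S_r$ with $x+y=t$ forces, after subtracting the two circle equations, the linear relation $t\cdot x=|t|^2/2$ (division by $2$ is legitimate since $q$ is odd); thus $x$ lies on the intersection of $S_r$ with an affine line, giving at most two representations. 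Here the hypothesis $q\equiv 3\bmod 4$ is convenient, as it guarantees that $0$ is the only isotropic vector, so every $t\ne0$ yields a genuine line and $|S_r|=q+1\approx q$. Writing $r(t):=|\{(x,y)\in S_r^2\colon x+y=t\}|$ so that $\mathcal{E}(S_r)=\sum_t r(t)^2$, the bound $r(t)\le 2$ for $t\ne0$ gives $\sum_{t\ne0}r(t)^2\le 2\sum_{t\ne0}r(t)\le 2|S_r|^2$, while the single degenerate term $r(0)=|S_r|$ contributes $|S_r|^2\approx q^2$; hence $\mathcal{E}(S_r)\ll q^2$, as required.

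With this estimate in hand the hypothesis of Theorem \ref{th:main} is satisfied for $d=3$, and the corollary follows at once. I expect essentially no obstacle beyond correctly bookkeeping the normalizations in the extension inequality and isolating the degenerate $t=0$ term in the energy count; the genuine content here is conceptual rather than technical, being the observation that $d=3$ is exactly the dimension where the conjectural extension exponent of Theorem \ref{th:main} coincides with the known $L^2\to L^4$ bound for circles.
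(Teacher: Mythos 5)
Your proposal is correct and follows essentially the same route as the paper: Corollary \ref{cor1.3} is the $d=3$ case of Theorem \ref{th:main}, where the required extension hypothesis degenerates to the $L^2\to L^4$ estimate for circles of nonzero radius in $\mathbb{F}_q^2$, which the paper imports as Theorem 2.2 from \cite{chapman} rather than reproving. Your additive-energy derivation of that circle estimate is a sound (and standard) substitute for the citation, and your exponent bookkeeping checks out.
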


If we assume $q$ is an odd prime number, then by using a recent theorem on \textit{bisector line energy} due to Murphy, Petridis, Pham, Rudnev, and Stevens \cite{MPPRP}, we can get a better exponent, namely, $\frac{5}{4}$ instead of $\frac{4}{3}$. 

\begin{theorem}\label{th:primefields}
Let $\mathbb{F}_p$ be a prime field, and $E$ be a set in $P_3$ in $\mathbb{F}_p^3$ with $p\equiv 3\mod 4$. Suppose that $|E|\gg p^{\frac{3}{2}-\frac{1}{4}}$, then 
\[|\prod(E)|\gg p.\]
Moreover, if $|E|\ll p^{5/4}$ and $|E\setminus \{(x_1, x_2, 0)\colon (x_1, x_2)\in \mathbb{F}_p^2\}|\gg |E|$, then we also have 
\[|\prod(E)|\gg |E|^{\frac{2}{3}}.\]
\end{theorem}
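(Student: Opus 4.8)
The plan is to prove Theorem~\ref{th:primefields} by reducing the dot-product problem on the paraboloid $P_3\subset\mathbb{F}_p^3$ to a distance-type problem in the plane $\mathbb{F}_p^2$, exactly as advertised in the abstract. The crucial observation is that for points $x=(x_1,x_2,x_1^2+x_2^2)$ and $y=(y_1,y_2,y_1^2+y_2^2)$ on $P_3$, the dot product takes the form
\[
x\cdot y = x_1y_1+x_2y_2+(x_1^2+x_2^2)(y_1^2+y_2^2).
\]
Writing $u=(x_1,x_2)$, $v=(y_1,y_2)\in\mathbb{F}_p^2$ and $\|u\|=x_1^2+x_2^2$, one checks the algebraic identity
\[
\tfrac12\bigl(\|u\|+\|v\|\bigr)+ x\cdot y-\|u\|\,\|v\| \ \text{relates to}\ \|u-v\|^2,
\]
so that after completing the square the dot products $x\cdot y$ are in bijective/affine correspondence with squared distances $\|u-v\|$ between the projected points $u,v$ lying on an appropriate sphere $S_r\subset\mathbb{F}_p^2$. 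The condition $|E\setminus\{(x_1,x_2,0)\}|\gg|E|$ guarantees that a positive proportion of the projected points have $\|u\|\ne 0$, so they genuinely sit on spheres of nonzero radius and the distance machinery applies.

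First I would dyadically decompose $E$ according to the value $\|u\|=x_1^2+x_2^2$ of the projection, grouping the points of $E$ by which sphere $S_r\subset\mathbb{F}_p^2$ their projections land on. By pigeonholing there is a single radius $r\ne 0$ capturing $\gg|E|/p$ of the points, or more carefully one argues that the full distance set over all these spheres already forces the image $\prod(E)$ to be large. The reduction turns a lower bound on the number of distinct dot products into a lower bound on the number of distinct distances (or a suitable pinned/bisector variant) determined by a planar point set of size $\approx|E|$. This is where the hypothesis $p\equiv 3\bmod 4$ enters: it ensures $-1$ is a non-square, so the spheres $S_r$ behave like genuine circles and the relevant quadratic form is anisotropic, preventing the degeneracies (isotropic lines) that spoil distance bounds in the $q\equiv 1$ case.

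For the first conclusion, $|\prod(E)|\gg p$ when $|E|\gg p^{5/4}$, I would invoke the planar distance result: a set of $N\gg p^{5/4}$ points in $\mathbb{F}_p^2$ determines $\gg p$ distinct distances. Feeding $N\approx|E|$ into this threshold yields $|\prod(E)|\gg p$, matching the claimed exponent $\tfrac32-\tfrac14=\tfrac54$. For the second, small-set conclusion $|\prod(E)|\gg|E|^{2/3}$ in the regime $|E|\ll p^{5/4}$, I would use the bisector/line-energy theorem of Murphy--Petridis--Pham--Rudnev--Stevens \cite{MPPRP}: controlling the number of pairs of points at a common distance via the bisector energy converts directly into the $N^{2/3}$-type lower bound on the distance set for small $N$, and the affine correspondence above transports this to $\prod(E)$.

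The main obstacle I expect is making the reduction from dot products to distances genuinely tight, i.e.\ controlling the multiplicity of the map $(u,v)\mapsto x\cdot y$ and ensuring that collapsing onto a single sphere $S_r$ does not lose more than a constant factor. Concretely, the quantity $x\cdot y-\|u\|\|v\|$ mixes the distance term with the product $\|u\|\|v\|$ of the radii, so the clean ``distance'' picture only emerges after fixing (or summing over) the radii, and one must verify that the number of dot-product values is not artificially inflated or deflated by points with $\|u\|=0$ or by coincidences among different radii. Handling this bookkeeping---together with checking that the anisotropy from $p\equiv 3\bmod4$ really does rule out the degenerate directions---is the technical heart of the argument; once it is in place, both the $p^{5/4}$ threshold and the $|E|^{2/3}$ small-set bound follow by plugging the corresponding planar distance estimate into the reduction.
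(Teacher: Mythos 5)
Your overall instinct---project to the plane, exploit $p\equiv 3\bmod 4$ for anisotropy, and invoke Murphy--Petridis--Pham--Rudnev--Stevens---is the right one, but the reduction you propose does not work as stated, and what is missing is precisely the step the paper is built on. Writing $x\cdot y=u\cdot v+\lVert u\rVert\,\lVert v\rVert$ and $\lVert u-v\rVert=\lVert u\rVert+\lVert v\rVert-2\,u\cdot v$, the dot product is an affine function of the distance $\lVert u-v\rVert$ only after \emph{both} radii $\lVert u\rVert=r$ and $\lVert v\rVert=s$ are fixed, and the affine map depends on $(r,s)$. Pigeonholing the projection onto a single sphere retains only about $|E|/p\approx p^{1/4}$ points, whose (cross-)distance set cannot possibly be $\gg p$; and your ``more careful'' alternative is a union over all pairs $(r,s)$ of differently translated small distance sets, for which you offer no tool to prevent massive overlap. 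So neither the $p^{5/4}$ threshold nor the $|E|^{2/3}$ bound follows from the planar distance theorems you cite.

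The fix is to aim not for a correspondence between the two \emph{sets} but between the two \emph{energies}. By Cauchy--Schwarz, $|\prod(E)|\gg |E|^{3}/D(E)$, where $D(E)=\#\{(x,y,z)\in E^{3}\colon x\cdot y=x\cdot z\}$. On the paraboloid, $x\cdot y=x\cdot z$ is equivalent to $\lVert c_x-\overline y\rVert=\lVert c_x-\overline z\rVert$ with $c_x=-\overline x/(2\lVert\overline x\rVert)$; this is the correct, \emph{pinned}, form of your identity, since for fixed $x$ one has $x\cdot y=\lVert\overline x\rVert\bigl(\lVert c_x-\overline y\rVert-\lVert c_x\rVert\bigr)$, so only one point needs to be fixed rather than two radii. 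Hence $D(E)$ is bounded by the number of isosceles triangles in the planar set $E'\cup F'$, which is exactly what the MPPRS theorem controls: $\ll |X|^{3}/p$ when $|X|\gg p^{5/4}$ and $\ll |X|^{7/3}$ when $|X|\ll p^{5/4}$, plus $O(|X|^{2})$ degenerate triangles---and this is where $p\equiv 3\bmod 4$ actually enters, since $\lVert y-z\rVert=0$ then forces $y=z$. Plugging these into the Cauchy--Schwarz bound gives $|\prod(E)|\gg p$ and $|\prod(E)|\gg |E|^{2/3}$ respectively. Without the Cauchy--Schwarz/energy step, the bisector-energy theorem you want to use has nothing to act on.
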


It is not clear to us how the method of this paper can be adapted for other varieties, say spheres, we hope to address this question in a sequel paper. We also note that for spheres, the dot product set $\prod(E)$ is of the same size as the distance set $\Delta(E)$, where $\Delta(E):=\{||x-y||\colon x, y\in E\}$. The exponent $d/2$ has been obtained in \cite[Theorem 2.8]{hart}.




\section{Preliminary: Extension estimates}
As mentioned in the introduction, the $L^2 \to L^r$ extension problem for the variety $V$ is to determine  all ranges of $r$ such that the following inequality 
\begin{equation}\label{defR}||fd\sigma^\vee||_{L^{r}(\mathbb{F}_q^d, dc)}\le C ||f||_{L^2(V, d\sigma)}\end{equation}
holds for any function $f$ on $V$.

In this paper, we only need extension results for spheres. The following is the well-known $L^2\to L^r$ extension conjecture in $\mathbb{F}_q^n$. We refer the reader to \cite{KPV} for more discussions.
\begin{conjecture}\label{concon}
For even $n\ge 2$, let $S_r$ be the sphere centered at the origin of radius $r$ with $r\ne 0$ in $\mathbb{F}_q^n$. We have the following $L^2\to L^r$ extension estimate 
\[R_{S_r}^*\left(2\to \frac{2n+4}{n}\right)\ll 1.\]
\end{conjecture}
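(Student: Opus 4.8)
The plan is to attack this through the finite-field analogue of the Stein–Tomas $TT^*$ method, reducing the extension bound to quantitative control on the Fourier decay of the surface measure. Writing $Tf:=(fd\sigma)^\vee$, by duality the estimate $R_{S_r}^*\left(2\to r\right)\ll1$ is equivalent to the bound $\|T^*T\|_{L^{r'}\to L^r}\ll1$ on the operator $T^*T$, and since $T^*Tg=g*(d\sigma)^\vee$ is convolution with the inverse Fourier transform of the measure, everything is governed by the function $(d\sigma)^\vee(c)=|S_r|^{-1}\sum_{x\in S_r}\chi(c\cdot x)$. First I would evaluate this sum explicitly via the identity $1_{\|x\|=r}=q^{-1}\sum_{s\in\mathbb{F}_q}\chi\big(s(\|x\|-r)\big)$, with $\|x\|:=x_1^2+\cdots+x_n^2$, together with the standard Gauss-sum evaluation of $\sum_x\chi(s\|x\|+c\cdot x)$ obtained by completing the square. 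In even dimension $n$ the relevant $n$-th power of the Gauss sum is real (a signed power of $q$), which is precisely the structural simplification singling out even $n$: the computation produces a main term of size $\asymp q^{-(n-1)/2}$ for frequencies off the isotropic cone $\{c:\|c\|=0\}$, together with a larger term, of size $\asymp q^{-(n-2)/2}$, supported on the cone.

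For the base case $n=2$ the target exponent is the even integer $r=4$, and here I would bypass $TT^*$ in favour of a direct computation. Expanding $\|Tf\|_{L^4}^4=|S_r|^{-4}\sum_{c}\big|\sum_{x\in S_r}\chi(c\cdot x)f(x)\big|^4$ and carrying out the sum over $c$ by orthogonality collapses the fourth moment to a weighted count of additive quadruples $x_1+x_2=x_3+x_4$ with all $x_i\in S_r$. Via Cauchy–Schwarz this reduces the whole inequality to the additive energy of the circle $S_r\subset\mathbb{F}_q^2$, equivalently to a uniform bound on the representation function $1_{S_r}*1_{S_r}$: since two distinct conics meet in $O(1)$ points, this function is $O(1)$ away from the origin, the energy is $\asymp|S_r|^2$, and one recovers $R_{S_r}^*\left(2\to4\right)\ll1$.

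For general even $n\ge4$ the exponent $r=\frac{2n+4}{n}=2+\frac4n$ is no longer an even integer, so I would run the Stein–Tomas interpolation proper: split $(d\sigma)^\vee$ into the point mass at the origin, the part $K_{\mathrm{off}}$ off the cone (where $\|K_{\mathrm{off}}\|_\infty\asymp q^{-(n-1)/2}$), and the part $K_{\mathrm{cone}}$ on the cone. The point mass and $K_{\mathrm{off}}$ are handled by interpolating the trivial $L^1\to L^\infty$ bound against the $L^2$ decay through Young and Hausdorff–Young, and yield the estimate comfortably for every exponent above the conjectured endpoint. The main obstacle is the term $K_{\mathrm{cone}}$: the isotropic cone carries $\asymp q^{n-1}$ frequencies on which $(d\sigma)^\vee$ loses a factor of $q^{1/2}$ relative to the generic decay, and dually this reflects the fact that in even dimension the sphere contains affine subspaces of dimension about $n/2-1$, which are exactly the extremizers pinning the sharp exponent at $\frac{2n+4}{n}$. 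Controlling this cone contribution at the endpoint is where the real work lies; I would attempt it by a dyadic decomposition in the value of $\|c\|$ and by slicing $S_r$ with the hyperplanes $c\cdot x=\text{const}$ to expose lower-dimensional spheres, feeding in the extension estimate one dimension down so that the cone term is absorbed by an induction on the dimension rather than by scalar decay alone. I expect this inductive control of the isotropic contribution to be the decisive and hardest step.
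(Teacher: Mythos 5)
You should first note what the paper itself does with this statement: it is stated as Conjecture \ref{concon}, used only as a \emph{hypothesis} in Theorem \ref{th:main}, and the authors explicitly say it is ``still wide open in dimensions $n\ge 4$''; the only proved case, $n=2$, is cited from \cite{chapman}. Your treatment of $n=2$ is essentially correct and coincides with the known argument: expanding the fourth moment, orthogonality collapses it to a weighted count of quadruples $x_1+x_2=x_3+x_4$ on the circle, and since for $z\ne 0$ the conditions $\|x\|=r$, $\|z-x\|=r$ confine $x$ to the line $2x\cdot z=\|z\|$ (and no line lies in a circle of nonzero radius, even when $q\equiv 1\bmod 4$), each nonzero $z$ has $O(1)$ representations; Cauchy--Schwarz then closes the $L^2\to L^4$ bound. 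So up to dimension two you match the literature.

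For even $n\ge 4$, however, there is both a genuine gap and a concrete error in your kernel analysis. The error: for radius $r\ne 0$, completing the square produces the complete sum $\sum_{s\ne 0}\chi\bigl(rs+\|c\|/(4s)\bigr)$, which for even $n$ (where $\eta^n\equiv 1$) is a Kloosterman sum, uniformly $O(q^{1/2})$; on the isotropic cone $\|c\|=0$ it degenerates to $-1$, i.e.\ it gets \emph{smaller}, not larger. Thus $|(d\sigma)^\vee(c)|\ll q^{-(n-1)/2}$ for all $c\ne 0$, and there is no cone term of size $q^{-(n-2)/2}$ to fight: that blow-up on $\{\|c\|=0\}$ is the signature of the zero-radius sphere $S_0$ (compare Lemma \ref{ExplicitS0} and Theorem \ref{zerosphere} in the paper), not of $S_r$ with $r\ne 0$. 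Consequently your decomposition attacks an obstruction that is absent, while the real obstruction remains untouched: with only uniform decay $q^{-(n-1)/2}$ and $L^2$ orthogonality, the $TT^*$/Stein--Tomas scheme is provably capped at the exponent $\frac{2n+2}{n-1}$, which is strictly larger than $\frac{2n+4}{n}$, and no interpolation of the trivial $L^1\to L^\infty$ bound against $L^2$ decay can cross that threshold. Passing below Stein--Tomas requires genuinely structural input about $S_r$ (additive-energy or incidence-type arguments in the spirit of \cite{Lew, RS18}), and in even dimensions the $(n/2-1)$-dimensional affine subspaces contained in $S_r$ --- which, as you correctly observe, pin the conjectured exponent from below --- are exactly what defeats current techniques at the endpoint. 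Your closing step, the ``inductive control of the isotropic contribution,'' is flagged in your own text as where the real work lies; since that step is never carried out (and its target, as explained, is misidentified), the proposal is a research plan rather than a proof, and the statement remains what the paper says it is: an open conjecture for $n\ge 4$.
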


It was proved in \cite{chapman} that this conjecture is true for $n=2$, namely,

\begin{theorem}
Let $C_r$ be the circle centered at the origin of radius $r$ with $r\ne 0$ in $\mathbb{F}_q^2$. We have the following $L^2\to L^r$ extension estimate
\[R_{C_r}^*\left(2\to 4\right)\ll 1.\]
\end{theorem}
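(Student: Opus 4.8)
The plan is to prove the $L^4$ estimate for the circle by the standard $TT^*$ (``square of the function'') argument, which in the finite-field setting reduces the inequality to counting additive quadruples on $C_r$. First I would set $g:=(fd\sigma)^\vee$ and expand the norm using the definitions in the excerpt:
\[
\|g\|_{L^4(\mathbb{F}_q^2, dc)}^4 = \sum_{c \in \mathbb{F}_q^2} |g(c)|^4 = |C_r|^{-4}\sum_{c}\ \sum_{x,y,z,w \in C_r}\chi\big(c\cdot(x-y+z-w)\big)\, f(x)\overline{f(y)}f(z)\overline{f(w)}.
\]
Summing the character over $c \in \mathbb{F}_q^2$ collapses the inner sum by orthogonality, retaining only the quadruples with $x - y + z - w = 0$, so that
\[
\|g\|_{L^4}^4 = q^2|C_r|^{-4}\sum_{\substack{x,y,z,w\in C_r\\ x+z = y+w}} f(x)\overline{f(y)}f(z)\overline{f(w)} \le q^2|C_r|^{-4}\sum_{s}F(s)^2,
\]
where $F(s) := \sum_{\,x-y=s,\ x,y\in C_r} |f(x)||f(y)|$ groups the quadruples by the common difference $s = x-y = w-z$.

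The heart of the matter is then a purely geometric count: for $s \ne 0$, the number $\nu(s)$ of pairs $(x,y)\in C_r^2$ with $x-y=s$ is at most $2$. Indeed, such an $x$ satisfies $\|x\|^2 = r$ and $\|x-s\|^2 = r$, and subtracting these gives the \emph{linear} equation $2\,x\cdot s = \|s\|^2$; hence $x$ lies on the intersection of the circle with a line, which contains at most two points (here one uses that $q$ is odd, so $2s\ne 0$ when $s\ne 0$). I would treat the diagonal term $s=0$, for which $F(0)=\sum_{x}|f(x)|^2$, separately, and for $s \ne 0$ apply Cauchy--Schwarz over the at most two contributing pairs to obtain $F(s)^2 \le 2\sum_{x-y=s}|f(x)|^2|f(y)|^2$. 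Summing over $s\ne 0$ then telescopes to $\sum_{s\ne 0}F(s)^2 \le 2\big(\sum_x|f(x)|^2\big)^2$, so that, together with the $s=0$ term, $\sum_s F(s)^2 \ll \big(\sum_x|f(x)|^2\big)^2$.

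Finally I would assemble the pieces. Recalling that $\|f\|_{L^2(C_r,d\sigma)}^2 = |C_r|^{-1}\sum_{x\in C_r}|f(x)|^2$ and that a circle of nonzero radius satisfies $|C_r| = q + O(1) \approx q$, the factor $q^2|C_r|^{-4}$ and the identity $\big(\sum_x|f(x)|^2\big)^2 = |C_r|^2\|f\|_{L^2}^4$ combine to give $\|g\|_{L^4}^4 \ll q^2|C_r|^{-2}\|f\|_{L^2}^4 \ll \|f\|_{L^2}^4$, which is exactly $R^*_{C_r}(2\to 4)\ll 1$ with an implied constant independent of $q$.

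I expect the only genuine obstacle to be the geometric lemma $\nu(s)\le 2$ together with the bookkeeping around it: one must verify uniformly that two equal-radius circles meet in at most two points (which I reduce to a circle--line intersection, hence to a single quadratic having at most two roots), and one must be careful that the degenerate cases---$s=0$, and $r=0$ which is excluded by hypothesis---are isolated so that the implied constant does not secretly depend on $q$. Everything else is orthogonality and Cauchy--Schwarz.
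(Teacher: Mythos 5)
Your argument is correct. Note that the paper itself offers no proof of this statement---it is quoted from the reference [chapman] (Chapman--Erdo\u{g}an--Hart--Iosevich--Koh)---so what you have written is a self-contained proof of an imported result; it is in fact the standard argument, and essentially the one in that reference. The expansion of $\|(fd\sigma)^\vee\|_{L^4}^4$ by orthogonality, the reduction to the additive energy $\sum_s F(s)^2$ of the circle, and the bound $\nu(s)\le 2$ for $s\ne 0$ via the radical-axis line $2\,x\cdot s=\|s\|$ all check out against the paper's normalizations ($dc$ the counting measure, $d\sigma$ the normalized surface measure, $|C_r|=q+O(1)$), and the powers of $q$ cancel exactly as you claim. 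The one point you should make fully explicit is the case where the line $2\,x\cdot s=\|s\|$ has isotropic direction (possible when $-1$ is a square in $\mathbb{F}_q$): substituting a parametrization then yields a polynomial of degree $\le 1$ rather than a genuine quadratic, so you cannot literally invoke ``a quadratic has at most two roots''; instead you should observe that this polynomial is not identically zero because the conic $x_1^2+x_2^2=r$ with $r\ne 0$ contains no line (a line $p+ta$ with $\|a\|=0$, $p\cdot a=0$ forces $p\in\mathbb{F}_q a$ and hence $\|p\|=0\ne r$). With that remark, and the standing assumption that $q$ is odd, the proof is complete and the constant is absolute.
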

Although Conjecture \ref{concon} is still wide open in dimensions $n\ge 4$, for the sphere of radius $0$, denoted by $S_0$, it has been shown in \cite{IKPSL} that the same conclusion holds. 
\begin{theorem}\label{zerosphere}
Let $S_0$ be the sphere centered at the origin of radius $0$. Assume $n\equiv 2\mod 4$ and $q\equiv 3\mod 4$, then the following $L^2\to L^r$ extension estimate holds:
\[R_{S_0}^*\left(2\to \frac{2n+4}{n}\right)\ll 1.\]
\end{theorem}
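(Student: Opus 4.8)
The plan is to pass to the dual restriction estimate and to reduce the whole problem to one exact identity coming from Gauss sums; the improvement past the Stein--Tomas exponent will then be produced by the spectral gap of the ``zero-sphere Cayley graph''. By duality the extension bound $R^*_{S_0}(2\to\frac{2n+4}{n})\ll 1$ is equivalent to the restriction estimate $\|\widehat{g}\|_{L^2(S_0,d\sigma)}\ll\|g\|_{L^{r'}(\mathbb{F}_q^n,dc)}$ with $r'=\frac{2n+4}{n+4}$, for every $g$ on $(\mathbb{F}_q^n,dc)$, since the adjoint of $f\mapsto (fd\sigma)^\vee$ is $g\mapsto\widehat{g}|_{S_0}$. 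Writing $Q(x)=x_1^2+\cdots+x_n^2$ and using $1_{S_0}(x)=q^{-1}\sum_{t}\chi(tQ(x))$, I would expand $\sum_{x\in S_0}|\widehat{g}(x)|^2$, perform the Gaussian sum in $x$ by completing the square coordinate-wise, and collect the quadratic Gauss sum $G=\sum_s\chi(s^2)$. The hypotheses $n\equiv 2\pmod 4$ and $q\equiv 3\pmod 4$ enter exactly here: they force $\eta(-1)=-1$ for the quadratic character $\eta$, hence $G^n=(\eta(-1)q)^{n/2}=-q^{n/2}$, which yields the identity
\[\sum_{x\in S_0}|\widehat{g}(x)|^2=q^{\,n-1}\|g\|_2^2-q^{\,n/2}\sum_{Q(c-c')=0}g(c)\overline{g(c')}+q^{\,n/2-1}\Big|\sum_c g(c)\Big|^2 .\]
The decisive feature is the minus sign before the middle term; if either congruence fails then $G^n=+q^{n/2}$, the sign flips, and the positive terms reinforce each other, which is why the estimate must break down for even $d$ (consistent with the constructions promised in the last section).

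Next I would recognize the middle sum as a graph quantity. Splitting off the diagonal, $\sum_{Q(c-c')=0}g(c)\overline{g(c')}=\|g\|_2^2+\langle Mg,g\rangle$, where $M$ is the adjacency operator of the Cayley graph $\mathrm{Cay}(\mathbb{F}_q^n,\,S_0\setminus\{0\})$. Its eigenvalues are the exponential sums $\sum_{w\in S_0}\chi(x\cdot w)-1$, which the very same Gauss-sum computation evaluates: the degree is $|S_0|-1\sim q^{n-1}$ at $x=0$, the eigenvalue is $\sim q^{(n-2)/2}$ when $Q(x)\neq 0$, and $\sim -q^{n/2}$ when $x$ lies on the cone $\{Q(x)=0\}$. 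Thus the graph is regular of degree $\sim q^{n-1}$ on $q^n$ vertices with second eigenvalue $\lambda\sim q^{n/2}$, the extremal value being attained precisely in the cone directions. Feeding $\|g\|_2^2+\langle Mg,g\rangle$ back into the identity, the two $|\sum_c g|^2$ contributions cancel exactly, and the surviving ``large'' part of $\sum_{x\in S_0}|\widehat g|^2$ beyond $q^{n-1}\|g\|_2^2$ is $q^{\,n/2-1}\big(|\sum_c g|^2-q\,\langle Mg,g\rangle\big)$.

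For an indicator $g=1_A$ the quantity $\langle Mg,g\rangle$ counts ordered isotropic pairs $c-c'\in S_0\setminus\{0\}$ in $A$, and the expander mixing lemma gives $\big|\langle Mg,g\rangle-q^{-1}|A|^2\big|\ll \lambda|A|\sim q^{n/2}|A|$, whence $\big||A|^2-q\langle Mg,g\rangle\big|\ll q^{\,n/2+1}|A|$ and the large part is $O(q^{\,n}|A|)$. The target is $\sum_{x\in S_0}|\widehat g|^2\ll q^{n-1}\|g\|_{r'}^2=q^{n-1}|A|^{2/r'}$; since $1-\tfrac{2}{r'}=-\tfrac{2}{n+2}$, one checks $q^{\,n}|A|\le q^{n-1}|A|^{2/r'}$ exactly in the range $|A|\ge q^{(n+2)/2}$, which is where the isotropic-pair cancellation is needed. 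For the complementary range $|A|\le q^{(n+2)/2}$ one discards the (helpful) pair term and verifies directly that $q^{\,n/2-1}|A|^2\le q^{n-1}|A|^{2/r'}$. A standard dyadic decomposition of a general $g$ into level sets $\{|g|\sim 2^k\}$, applying the indicator bound on each piece and summing against the $L^{r'}$ norm, then upgrades the estimate to arbitrary $g$ and completes the restriction bound.

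The step I expect to be the main obstacle is the passage from the spectral information to the sharp exponent. Treating $\langle Mg,g\rangle$ crudely as a quadratic form only yields $\lambda\|g\|_2^2\sim q^{n/2}\|g\|_2^2$, and after multiplication by $q^{n/2}$ this reproduces merely the trivial $L^2\to L^2$ bound with its factor of $q$; the gain to $\tfrac{2n+4}{n}$ is recovered only by comparing the expander error against $q^{n-1}\|g\|_{r'}^2$ rather than $q^{n-1}\|g\|_2^2$, i.e. by the size-of-level-set case analysis above, and it relies on the exact cancellation of the $|\sum_c g|^2$ terms furnished by the sign $G^n=-q^{n/2}$. Making this bilinear/indicator argument uniform through the dyadic reduction, while keeping the constant independent of $q$, is the delicate point, and it is exactly where the two congruence conditions are indispensable.
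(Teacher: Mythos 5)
First, a point of context: the paper does not prove Theorem \ref{zerosphere} at all --- it is imported from \cite{IKPSL}, and the only ingredient of that work reproduced here is Lemma \ref{ExplicitS0}. So there is no internal proof to compare against; your attempt must be judged on its own. On that score, your core computation is correct and is in fact equivalent to Lemma \ref{ExplicitS0}: with $q\equiv 3\pmod 4$ one has $\eta(-1)=-1$, hence $G^2=-q$, and $n\equiv 2\pmod 4$ makes $n/2$ odd, so $G^n=-q^{n/2}$; unwinding $1_{S_0}(x)=q^{-1}\sum_t\chi(tQ(x))$ then gives exactly the signed identity you state, which matches $\widehat{S_0}(m)=q^{-1}\delta_0(m)-q^{-(n+2)/2}\sum_{r\neq 0}\chi(r\|m\|)$. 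Your indicator-level numerology also checks out: dropping the (nonnegative) isotropic-pair term handles $|A|\le q^{(n+2)/2}$, the mixing bound $\bigl||A|^2-qP(A)\bigr|\ll q^{n/2+1}|A|$ handles $|A|\ge q^{(n+2)/2}$, and the two ranges meet precisely at the critical size, yielding the restricted strong-type estimate at $r'=\frac{2n+4}{n+4}$.

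The genuine gap is the final step, which you flag but do not close: passing from indicators to general $g$ at the endpoint with a constant independent of $q$. Two separate problems arise there. First, the naive level-set decomposition $g=\sum_k 2^k 1_{A_k}$ followed by the triangle inequality gives $\sum_k x_k^{1/r'}$ with $x_k=2^{kr'}|A_k|$, and since $1/r'<1$ this loses a factor $K^{1-1/r'}$ with $K\sim\log q$ scales; so ``a standard dyadic decomposition'' as stated proves only $R^*_{S_0}\bigl(2\to\frac{2n+4}{n}\bigr)\ll (\log q)^{c}$, not $\ll 1$. Second, and more structurally, the positivity that powers your small-set case does not survive the reduction: for complex $g$ the cone form $B(g)=\sum_{Q(c-c')=0}g(c)\overline{g(c')}$ is indefinite, and if you instead bound $|B|$ by absolute values you recover $q^{n/2}P$-type terms of size $q^n|A|$, which \emph{fail} against the target $q^{n-1}|A|^{2/r'}$ exactly in the range $|A|<q^{(n+2)/2}$ where you previously discarded the term by sign. (One can check that the purely spectral route is also a dead end: $-q^{n/2}B(g)\le (1-q^{-1})\sum_{x\in S_0}|\widehat g(x)|^2$, and absorbing this reproduces only the trivial $L^2$ bound with its factor of $q$, as you anticipated.) What rescues the argument is that your indicator estimate has \emph{polynomial} slack away from the two critical sizes $|A|\sim 1$ and $|A|\sim q^{(n+2)/2}$, and a Bourgain/Mockenhaupt--Tao style summation over pairs of level sets --- exploiting that when the $x_k$ are flat the sets $|A_k|$ move geometrically through the critical band, so at most $O(1)$ levels are tight --- sums without loss. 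This bilinear bookkeeping is not a formality; it is the actual content separating your restricted-type estimate from Theorem \ref{zerosphere} as stated, and it is the part of \cite{IKPSL} your sketch leaves unreconstructed.
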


With these results in hand, we are ready to prove Theorem \ref{th:main} in the next section. 
\section{Proof of Theorem \ref{th:main}}
The proof of Theorem \ref{th:main} contains two main steps: Reducing to the triangle problem and Bounding the number of isosceles triangles. 
\subsection{Reducing to the isosceles triangles problem}

By the Cauchy-Schwarz inequality, we observe that 
\begin{equation}\label{first-eq}\left|\prod(E)\right|\gg \frac{|E|^3}{|D(E)|},\end{equation}
where $D(E)$ is the number of triples $(x, y, z)\in E^3$ such that $x\cdot y=x\cdot z$. To see this, first, by Cauchy-Schwarz inequality, we have  
\begin{equation}\label{first-eq}\left|\prod(E)\right|\geq \frac{|E|^4}{|M(E)|},\end{equation} 
where $M(E)=\{(x, y, w, z) \in E^4: x\cdot y= w\cdot z\}.$ Thus it suffices to show $|M(E)| \leq |E| |D(E)|$.
Now for a given $t$ and $x\in E$, write $\pi_x^t(E)=\{y \in E, x\cdot y=t\}.$ Then, we observe that
$$|M(E)|=\sum_{t} \left(\sum_{x} |\pi_x^t(E)|\right)^2.$$ By Cauchy-Schwarz inequality, we have 
$$\sum_{t} \left(\sum_{x} |\pi_x^t(E)|\right)^2 \leq \sum_{t} |E| \sum_{x} |\{(y, z) \in E^2, x\cdot y=x\cdot z=t\}| = |E||D(E)|.$$
For any point $x=(x_1, \ldots, x_d)\in E\subset P_d$, we define $\overline{x}:=(x_1, \ldots, x_{d-1})$, and let $\overline{E}:=\{\overline{x}\colon x\in E\}\subset \mathbb{F}_q^{d-1}$. 

Under our assumptions on the set $E$, without loss of generality, we may assume that $||\overline{x}||\ne 0$ for all $x\in E$.

For $x, y, z\in P_d$, the identity $x\cdot y=x\cdot z$ can be rewritten as 
\[(\overline{x}, ||\overline{x}||)\cdot (\overline{y}-\overline{z}, ||\overline{y}||-||\overline{z}||)=0.\]
This implies that
\[\left(\frac{\overline{x}}{||\overline{x}||}, 1\right)\cdot \left(\overline{y}-\overline{z}, ||\overline{y}||-||\overline{z}||\right)=0.\]
So 
\begin{equation}\label{eq:triangle}
\left\lVert\frac{-\overline{x}}{2||\overline{x}||}-\overline{y}\right\rVert=\left\lVert\frac{-\overline{x}}{2||\overline{x}||}-\overline{z}\right\rVert.\end{equation}
Set $F':=\left\lbrace \frac{-\overline{x}}{2||\overline{x}||}\colon \overline{x}\in E'\right\rbrace\subset \mathbb{F}_q^{d-1}$. 

The equation (\ref{eq:triangle}) counts the number of isosceles triangles with one vertex from $F'$ and the two other vertices (base) from $E'$. 

In other words, to bound the size of $D(E)$ from above, it is enough to count the number of isosceles triangles with vertices in $F'$ and $E'$ satisfying the relation (\ref{eq:triangle}).

\subsection{Bounding the number of isosceles triangles}
Given $X\subset \mathbb{F}_q^d$ and $y\in \mathbb{F}_q^d$, we first count the number of isosceles triangles with a given apex. 
\begin{lemma}\label{lem2.3}
Let $X \subset \mathbb F_q^n$ and $y\in X.$ Then
we have
$$ \sum_{x,z\in X: ||x-y||=||z-y||\ne 0} 1 \ll \frac{|X|^2}{q}+  q^n \sum_{r\in \mathbb F_q^*} \left| \sum_{m\in S_r} \widehat{X}(m) \chi(y\cdot m)\right|^2+q^n\left\vert \sum_{||m||=0, m \ne 0}\widehat{X}(m)\chi(y\cdot m) \right\vert^2.$$ 
\end{lemma}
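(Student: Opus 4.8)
The plan is to fix the apex $y\in X$ and organize the count by the common base radius. Writing $N_r:=\#\{x\in X\colon \|x-y\|=r\}$, the left-hand side is exactly $\sum_{r\ne 0}N_r^2$, since expanding the square recovers the pairs $(x,z)$ with $\|x-y\|=\|z-y\|=r\ne 0$. First I would expand the sphere indicator via the orthogonality identity $\mathbf 1[\|u\|=r]=q^{-1}\sum_{s\in\mathbb{F}_q}\chi(s(\|u\|-r))$, which gives $N_r=\frac{|X|}{q}+\frac1q\sum_{s\ne 0}\chi(-sr)W(s)$ with $W(s):=\sum_{x\in X}\chi(s\|x-y\|)$. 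Squaring, summing over all $r\in\mathbb{F}_q$, and using $\sum_r\chi(-r(s+s'))=q\,\mathbf 1[s'=-s]$ together with $W(-s)=\overline{W(s)}$, the cross terms collapse and I obtain $\sum_{r}\big(N_r-\tfrac{|X|}{q}\big)^2=\frac1q\sum_{s\ne 0}|W(s)|^2$. Since $\sum_r N_r=|X|$, this rearranges to $\sum_{r\ne 0}N_r^2\le \frac{|X|^2}{q}+\frac1q\sum_{s\ne 0}|W(s)|^2$, which already isolates the first term of the claimed bound.

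It then remains to convert $\frac1q\sum_{s\ne 0}|W(s)|^2$ into the two spherical Fourier terms. Here I would evaluate $W(s)$ by inserting the inversion formula $X(x)=\sum_m\widehat X(m)\chi(x\cdot m)$ and completing the square in the resulting Gaussian sum: for $s\ne 0$, $\sum_{u}\chi(s\|u\|+u\cdot m)=\eta(s)^n\,\mathcal G^n\,\chi\!\big(-\tfrac{\|m\|}{4s}\big)$, where $\eta$ is the quadratic character and $\mathcal G$ the associated Gauss sum with $|\mathcal G|^2=q$. This gives $W(s)=\eta(s)^n\mathcal G^n\sum_m\widehat X(m)\chi(y\cdot m)\chi\!\big(-\tfrac{\|m\|}{4s}\big)$, hence $|W(s)|^2=q^{n}\big|\sum_\rho B(\rho)\chi(-\rho/(4s))\big|^2$, where I group frequencies by $\rho=\|m\|$ and set $B(\rho):=\sum_{m\in S_\rho}\widehat X(m)\chi(y\cdot m)$. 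Substituting $t=1/(4s)$, the sum over $s\ne 0$ becomes a one-dimensional sum over $t\ne 0$ of $|\widehat B(t)|^2$, so one-dimensional Plancherel yields $\frac1q\sum_{s\ne 0}|W(s)|^2=q^{n}\sum_\rho|B(\rho)|^2-q^{n-1}|\widehat B(0)|^2$. The crucial point is that $\widehat B(0)=\sum_m\widehat X(m)\chi(y\cdot m)=X(y)=1$ by Fourier inversion and the hypothesis $y\in X$, so the subtracted term only helps.

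Finally I would split $\sum_\rho|B(\rho)|^2$ according to $\rho\ne 0$ and $\rho=0$. The terms with $\rho\ne 0$ produce exactly $q^n\sum_{r\ne 0}\big|\sum_{m\in S_r}\widehat X(m)\chi(y\cdot m)\big|^2$, the second term on the right. For $\rho=0$ I separate the frequency $m=0$, whose contribution $\widehat X(0)=|X|/q^n$ gives a term $\ll |X|^2/q^n\le |X|^2/q$ that is absorbed into the first term, while the remaining light-cone frequencies $\{\|m\|=0,\ m\ne 0\}$ give precisely the third term $q^n\big|\sum_{\|m\|=0,m\ne 0}\widehat X(m)\chi(y\cdot m)\big|^2$ after an application of $|a+b|^2\ll |a|^2+|b|^2$. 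Collecting the three contributions proves the inequality. I expect the main technical obstacle to be the Gauss-sum evaluation of $W(s)$ — in particular checking that $|\eta(s)^n\mathcal G^n|=q^{n/2}$, so that $|W(s)|^2$ carries the clean factor $q^n$ regardless of the parity of $n$ and the residue of $q$ — together with the bookkeeping that keeps the $m=0$ frequency rigorously separated from the light cone.
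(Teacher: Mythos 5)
Your proof is correct, but it takes a genuinely different route from the paper's. The paper symmetrizes over the orthogonal group: it writes the count as $\sum_{r\ne 0}\sum_{x\in S_r}X(y-x)\sum_{z\in S_r}X(y-z)$, bounds it by $\frac{1}{|O(n-1)|}\sum_{\theta\in O(n)}\sum_{r\ne0}\sum_{x\in S_r}X(y-x)X(y-\theta x)$ using transitivity of $O(n)$ on spheres, and then applies Fourier inversion and orthogonality in $x$ to collapse the $\theta$-average into the spherical sums $\sum_{m\in S_r}\widehat X(m)\chi(y\cdot m)$. You instead work entirely with additive characters in the radius variable: the identity $\sum_r N_r^2=\frac{|X|^2}{q}+\frac1q\sum_{s\ne0}|W(s)|^2$, the Gauss-sum evaluation $W(s)=\eta(s)^n\mathcal G^n\sum_m\widehat X(m)\chi(y\cdot m)\chi(-\|m\|/(4s))$, and one-dimensional Plancherel over $t=1/(4s)$ to land on $q^n\sum_\rho|B(\rho)|^2$. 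The two mechanisms deliver the same three terms; your computation is an exact identity up to the final positivity discards and nicely avoids the group-theoretic input ($|O(n)|\approx q^{n-1}|O(n-1)|$, Witt transitivity on $S_r$ and on $S_0\setminus\{0\}$), at the cost of requiring the explicit Gauss-sum formula and odd characteristic — which is harmless here, since the lemma is only applied with $q\equiv 3\bmod 4$, and the paper's own argument also implicitly assumes $q$ odd. One cosmetic difference: your $m=0$ contribution is $|X|^2/q^n$ rather than the paper's $|X|^2/q$, so your bound is in fact marginally cleaner; both are absorbed into the stated $|X|^2/q$ term.
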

\begin{proof}

Let $O(n)$ be the orthogonal group of $n\times n$ matrices in $\mathbb{F}_q$. It is well-known that $|O(n)|=(1+o(1))q^{\frac{n^2}{2}}$, and the stabilizer of any non-zero vector in $\mathbb{F}_q^n$ is of the size $|O(n-1)|$. It is not hard to prove that $|O(n)|=|S_1||O(n-1)|=(1+o(1))q^{n-1}|O(n-1)|$. We note that $O(d)$ acts transitively on the set of non-zero vectors of any given norm. 

We first note that if there is an $x \in X$ with $y-x \in S_r$, the sphere with radius $r$ in $\mathbb{F}_q^n$, then writing $y-x = x'$, we have a one-to-one correspondence between $x' \in S_r$ and $y-x' \in X$. 
Hence, we can write   
\begin{align*} \sum_{x,z\in X: ||x-y||=||z-y||\ne 0} 1 &=\sum_{r\in \mathbb F_q^*} \sum_{x\in S_r} X(y-x) \sum_{z\in S_r} X(y-z)\\
&\le \frac{1}{|O(n-1)|} \cdot \sum_{\theta\in O(n)} \sum_{r\in \mathbb F_q^*} \sum_{x\in S_r} X(y-x) X(y-\theta{x}).
\end{align*}
Applying the Fourier inversion theorem to functions $X(y-x), X(y-\theta{x}),$ that is
$$X(y-x) =\sum_{m \in \mathbb F_q^n} \widehat{X}(m) \chi(m(y-x)).$$
We have
$$\sum_{x,z\in X: ||x-y||=||z-y||\ne 0} 1 \le \frac{1}{|O(n-1)|} \cdot \sum_{\theta\in O(n)} \sum_{m,m'\in \mathbb F_q^n} \widehat{X}(m) \widehat{X}(m') \chi(y\cdot (m+m'))  \sum_{x\in \mathbb F_q^n} \chi(-m\cdot x-m'\cdot \theta{x}).$$
By the orthogonality of $\chi$, we compute the above sum in $x\in \mathbb F_q^d$, then, the right-hand size of the above inequality becomes
$$\frac{q^n}{|O(n-1)|} \cdot\sum_{\theta\in O(n)} \sum_{m\in \mathbb F_q^n} \widehat{X}(m) \widehat{X}(-\theta{m}) \chi(y\cdot (m-\theta{m})),$$
which can be decomposed as the sum of 
$$\frac{q^n}{|O(n-1)|} \sum_{\theta\in O(n)} \sum_{m\in S_0} \widehat{X}(m) \widehat{X}(-\theta{m}) \chi(y\cdot (m-\theta{m}))$$  and $$\frac{q^n}{|O(n-1)|} \sum_{\theta\in O(n)} \sum_{r\in \mathbb F_q^*}\sum_{m\in S_r} \widehat{X}(m) \widehat{X}(-\theta{m}) \chi(y\cdot (m-\theta{m})),$$
which is equal to
\begin{align*}
&= \frac{q^n}{|O(n-1)|}\sum_{\theta\in O(n)} \sum_{m\in S_0} \widehat{X}(m) \widehat{X}(-\theta{m}) \chi(y\cdot (m-\theta{m})) + q^n \sum_{r\in \mathbb F_q^*} \left| \sum_{m\in S_r} \widehat{X}(m) \chi(y\cdot m)\right|^2\\
&\ll \frac{|X|^2}{q}+  q^n \sum_{r\in \mathbb F_q^*} \left| \sum_{m\in S_r} \widehat{X}(m) \chi(y\cdot m)\right|^2+q^n\left\vert \sum_{||m||=0, m \ne 0}\widehat{X}(m)\chi(y\cdot m) \right\vert^2.
 \end{align*}
\end{proof}

Lemma \ref{lem2.3} shows that the number of isosceles triangles can be reduced to extension-type estimates associated to spheres. Thus, we now can apply results in Section 2 to derive the next theorem.

\begin{theorem}\label{th:isos}
For $n\equiv 2\mod 4$ and $X\subset \mathbb{F}_q^n$ with $q\equiv 3\mod 4$. Assume that Conjecture \ref{concon} holds, then the number of isosceles triangles is bounded by 
\[\ll \frac{|X|^3}{q}+q^{d-1}|X|^{\frac{n+4}{n+2}}+q^{\frac{n-2}{2}}|X|^2.\]
\end{theorem}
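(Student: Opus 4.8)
The plan is to sum the per-apex estimate of Lemma \ref{lem2.3} over all choices of the apex $y\in X$, so that the left-hand side becomes the total number of isosceles triangles in $X$, and then to bound each of the three resulting pieces separately. The first piece is immediate: summing the term $|X|^2/q$ over the $|X|$ choices of $y$ produces $|X|^3/q$, which is the first term in the claimed bound. The other two pieces are the two ``Fourier error'' sums over the spheres $S_r$ with $r\neq 0$ and over the isotropic cone $S_0$, and these are where the extension estimates are brought in.

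For the nonzero-radius piece I would recognize the inner sum as an extension operator. Setting $f_r:=\widehat{X}\cdot 1_{S_r}$, one has $\sum_{m\in S_r}\widehat{X}(m)\chi(y\cdot m)=|S_r|\,(f_r d\sigma)^\vee(y)$, so this piece equals $q^{n}\sum_{r\neq 0}|S_r|^2\sum_{y\in X}|(f_r d\sigma)^\vee(y)|^2$. The key move is to pass from summing over $X$ to an $L^{(2n+4)/n}$ norm over the whole space: by H\"older with exponents $\tfrac{n+2}{2}$ and $\tfrac{n+2}{n}$ one gets $\sum_{y\in X}|(f_r d\sigma)^\vee(y)|^2\le |X|^{2/(n+2)}\,\|(f_r d\sigma)^\vee\|_{L^{(2n+4)/n}}^2$, at which point Conjecture \ref{concon} yields $\|(f_r d\sigma)^\vee\|_{L^{(2n+4)/n}}\ll\|f_r\|_{L^2(S_r,d\sigma)}$ and hence the bound $|S_r|^{-1}\sum_{m\in S_r}|\widehat{X}(m)|^2$. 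Summing over $r\neq 0$, using $|S_r|\approx q^{n-1}$ together with Plancherel in the form $\sum_{\|m\|\neq 0}|\widehat{X}(m)|^2\le\sum_m|\widehat{X}(m)|^2\ll q^{-n}|X|$, collapses this piece to $q^{n-1}|X|^{(n+4)/(n+2)}$; here the exponent $(n+4)/(n+2)=1+\tfrac{2}{n+2}$, with the extra $\tfrac{2}{n+2}$ being precisely the H\"older cost of restricting the apex to $X$.

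The third piece, coming from the isotropic directions $\|m\|=0$, is where the congruence hypotheses enter, and I expect it to be the main obstacle. Conjecture \ref{concon} does not cover $S_0$, but exactly because $n\equiv 2\pmod 4$ and $q\equiv 3\pmod 4$ we may invoke Theorem \ref{zerosphere}, the unconditional $L^2\to L^{(2n+4)/n}$ extension estimate for the zero-radius sphere. Running the same H\"older-plus-extension scheme with $S_0$ in place of $S_r$ — while carefully excising the origin $m=0$, whose separated contribution involves only the harmless $\widehat{X}(0)=q^{-n}|X|$, and inserting the correct size of $S_0$ — yields the remaining term $q^{(n-2)/2}|X|^2$. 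Adding the three pieces gives the stated bound. The delicate part is the zero-sphere bookkeeping: tracking the normalization of $d\sigma$ on $S_0$, the exact size $|S_0|$ (whose lower-order fluctuation is of size $q^{(n-2)/2}$ and is the source of the last exponent), and the removal of the origin; everything else is a routine combination of H\"older's inequality, the quoted extension estimates, and Plancherel.
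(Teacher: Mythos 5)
Your treatment of the non-degenerate triangles --- summing Lemma \ref{lem2.3} over the apex $y\in X$, disposing of the first term trivially, and running H\"older plus the extension estimates (Conjecture \ref{concon} for $r\ne 0$, Theorem \ref{zerosphere} for the isotropic sphere) on the two Fourier pieces --- matches the paper's argument for that part. But there is a genuine gap at your very first step: the left-hand side of Lemma \ref{lem2.3} is $\sum_{x,z\in X\colon \|x-y\|=\|z-y\|\ne 0}1$, so summing it over $y\in X$ does \emph{not} give the total number of isosceles triangles; it omits every triple with $\|x-y\|=\|z-y\|=0$. These degenerate triangles are not negligible a priori (for $n\ge 6$ the isotropic cone is large, and a set concentrated on an isotropic line produces far more than $|X|^3/q$ of them), and nothing in your argument controls them. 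The paper bounds them separately: it shows that the number of pairs at distance zero is at most $|X|^2/q+q^{(n-2)/2}|X|$ by inserting the explicit Gauss-sum formula for $\widehat{S_0}$ (Lemma \ref{ExplicitS0}), which is valid precisely because $n\equiv 2\bmod 4$ and $q\equiv 3\bmod 4$, and then multiplies by $|X|$ to get $|X|^3/q+q^{(n-2)/2}|X|^2$. That computation is the actual source of the term $q^{(n-2)/2}|X|^2$ in the statement.

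Relatedly, your attribution of the term $q^{(n-2)/2}|X|^2$ to the zero-sphere extension estimate is not right. In the paper the $\|m\|=0,\ m\ne 0$ piece of Lemma \ref{lem2.3} is bounded by Theorem \ref{zerosphere} in exactly the same way as the $r\ne 0$ spheres and is then absorbed, via Plancherel over all of $\mathbb{F}_q^n$, into the single term $q^{n-1}|X|^{(n+4)/(n+2)}$; it does not produce $q^{(n-2)/2}|X|^2$, and no bookkeeping of the size of $S_0$ or of the origin's contribution will make it do so. The missing ingredient is a separate count of zero-distance pairs, which requires the explicit evaluation of $\widehat{S_0}$ under the stated congruence conditions rather than the extension machinery.
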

\begin{proof}
Let $T^{\mathtt{nde}}(X)$ be the number of isosceles triangles in $E$ of the form $(x, y, z)\in X^3$ such that $||x-y||=||x-z||\ne 0$. Let $T^{\mathtt{de}}(X)$ be the number of triangles with at least one side of zero length. 

To bound $T^{\mathtt{de}}(X)$, we will show that the number of pairs $(x, y)\in X\times X$ such that $||x-y||=0$ is at most 
\[\frac{|X|^2}{q}+q^{\frac{n-2}{2}}|X|.\]
In other words, once we have the bound above. Then
$$  \sum_{x,y, z\in X: ||x-y||=||z-y|| =0} 1 \leq \sum_{z \in X }\sum_{x, y \in X: ||x-y||=0} 1 \leq \frac{|X|^3}{q} + q^{\frac{n-2}{2}}|X|^2.$$
Now write 
$$\sum_{x, y \in X: ||x-y||=0} 1= \sum_{x, y \in \mathbb F_q^n} X(x)X(y) S_0(x-y),$$
which, by the Fourier inversion formula, becomes
$$ \sum_{x, y \in \mathbb F_q^n} X(x)X(y) \sum_{m \in \mathbb F_q^n} \widehat{S_0}(m)\chi{((x-y)m)},$$
which is $\sum_{m  \in \mathbb F_q^n} |\widehat{X}(m)|^2 \widehat{S_0}(m).$
In order to proceed further, we recall the following lemma on the Fourier transform of the sphere of zero radius from \cite{IKPSL}.
\begin{lemma}[\cite{IKPSL}] \label{ExplicitS0}Let $S_0$ be the sphere with zero radius in $\mathbb F_q^n.$ Assume that $n=4k+2$ for $k\in \mathbb N$ and $q\equiv 3\mod 4.$
Then we have
$$ \widehat{S_0}(m) :=q^{-n} \sum_{y\in S_0} \chi(m\cdot y) = q^{-1} \delta_0(m) -q^{\frac{-(n+2)}{2}} \sum_{r\ne 0} \chi(r\| m \|),$$
where $\delta_0(m)=1 $ for $m=(0,\ldots,0)$, and $0$ otherwise.
\end{lemma} 

Now inserting the formula for $\widehat{S_0}(m)$, we get 
$$ \sum_{ m \in \mathbb F_q^n} |\widehat{X}(m)|^2 q^{-1} \delta_0(m) -q^{\frac{-(n+2)}{2}} \sum_{m\in \mathbb F_q^n} |\widehat{X}(m)|^2 \sum_{r\ne 0} \chi(r\|m\|).$$
Applying the orthogonality relation of $\chi$ to the sum over $r\ne 0$, we obtain
$$ |\widehat{X}(0,\ldots,0)|^2 q^{-1}  -q^{\frac{-(n+2)}{2}} (q-1) \sum_{\|m\|=0} |\widehat{X}(m)|^2  + q^{\frac{-(n+2)}{2}} \sum_{\|m\|\ne 0} |\widehat{X}(m)|^2$$
$$= q^{-1}|X|^2 -q^{\frac{-(n+2)}{2}} q\sum_{\|m\|=0} |\widehat{X}(m)|^2 + q^{\frac{-(n+2)}{2}} \sum_{m\in \mathbb F_q^n} |\widehat{X}(m)|^2$$
Since $\sum_{m\in \mathbb F_q^n} |\widehat{X}(m)|^2 =q^n |X|$ and the middle term above is negative, we get that
$$\sum_{m  \in \mathbb F_q^n} |\widehat{X}(m)|^2 \widehat{S_0}(m) \leq \frac{|X|^2}{q} + q^{\frac{n-2}{2}}|X|.$$

Hence, 
\[T^{\mathtt{de}}(X)\ll \frac{|X|^3}{q}+q^{\frac{n-2}{2}}|X|^2.\]
To bound $T^{\mathtt{nde}}$, we observe that
\[T^{\mathtt{nde}}(X)=\sum_{y\in X}\sum_{\substack{x, z\in X:\\||x-y||=||z-y||}}1.\]
Thus, applying Lemma \ref{lem2.3}, it suffices to bound the following sums:
\[\sum_{y\in X}\left\vert \sum_{m\in S_r}\widehat{X}(m)\chi(y\cdot m)\right\vert^2 ~\mbox{with} ~r\ne 0,\] and  \[\sum_{y\in X}\left\vert \sum_{m\in S_0}\widehat{X}(m)\chi(y\cdot m)\right\vert^2.\]
For the first sum,  
\begin{align*}
&\sum_{y\in X}\left\vert \sum_{m\in S_r}\widehat{X}(m)\chi(y\cdot m)\right\vert^2=|S_r|^2\sum_{y\in X}|fd\sigma^\vee(y)|^2\\
&\le |S_r|^2\cdot |X|^{\frac{2}{n+2}}\cdot ||fd\sigma^\vee||_{L^{\frac{2n+4}{n}}(\mathbb{F}_q^n, dc)}^2.
\end{align*}

Assuming Conjecture \ref{concon} holds, i.e.  
\[||fd\sigma^\vee||_{L^\frac{2n+4}{n}(\mathbb{F}_q^n, dc)}\ll ||f||_{L^2(S_r, d\sigma)},\]
we have
\begin{align*}
&\sum_{y\in X}\left\vert \sum_{m\in S_r}\widehat{X}(m)\chi(y\cdot m)\right\vert^2=|S_r|^2\sum_{y\in X}|fd\sigma^\vee(y)|^2\\
&\le |S_r|^2\cdot |X|^{\frac{2}{n+2}}\cdot ||fd\sigma^\vee||_{L^{\frac{2n+4}{n}}(\mathbb{F}_q^n, dc)}^2\\
&\le |S_r|^2\cdot |X|^{\frac{2}{n+2}}\cdot ||f||_{L^2(S_r, d\sigma)}^2, ~f=\widehat{X}\\
&=|S_r|\cdot |X|^{\frac{2}{n+2}}\sum_{m\in S_r}|\widehat{X}(m)|^2.
\end{align*}

Similarly, using Theorem \ref{zerosphere}, we have the same bound for the second sum. Using the fact that $|S_r|=(1+o(1))q^{n-1}$, we have
\begin{align*}
    &T^{\mathtt{nde}}(X)\ll \frac{|X|^3}{q}+q^{n-1}\sum_{r\in \mathbb{F}_q}|X|^{\frac{2}{n+2}}\sum_{m\in S_r}|\widehat{X}(m)|^2\\
    &=\frac{|X|^3}{q}+q^{n-1}|X|^{\frac{n+4}{n+2}}.
\end{align*}
Putting the bounds of $T^{\mathtt{nde}}(X)$ and $T^{\mathtt{de}}(X)$ together gives us the desired estimate. 
\end{proof}
\subsection{Concluding the proof}
Setting $X=E'\cup F'\subset \mathbb{F}_q^{d-1}$. We have $|X|\le 2|E|.$ It is not hard to see that $D(E)$ is bounded by the number of isosceles triangles in $X$. So applying Theorem \ref{th:isos} and (\ref{first-eq}) concludes the proof. 

\section{Proof of Theorem \ref{th:primefields}}
We follow the proof of Theorem \ref{th:main} identically, except that we have a more effective bound on the number of isosceles triangles in two dimensions due to Murphy, Petridis, Pham, Rudnev and Stevens \cite{MPPRP}. 

Given a set $X\subset \mathbb{F}_p^{2}$, we say that a triple $(x, y, z)\in X^3$ forms a \textit{non-degenerate} isosceles triangle if $||x-y||=||x-z||$ and $||y-z||\ne 0$. If $||x-y||=||x-z||$ and $||y-z||=0$, we say the triangle is \textit{degenerate}.

\begin{theorem}[Non-degenerate isosceles triangles]
Let $X$ be a set in $\mathbb{F}_p^2$ with $|X|\le p^{4/3}$. Let $T^*(X)$ be the number of non-degenerate isosceles triangles in $X$. We have 
\[T^*(X)-\frac{|X|^3}{p}\ll \min \left\lbrace p^{2/3}|X|^{5/3}+p^{1/4}|X|^2,~ |X|^{7/3}\right\rbrace\]
\end{theorem}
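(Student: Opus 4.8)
The plan is to bound the number of non-degenerate isosceles triangles $T^*(X)$ by counting, for each fixed apex $x$, the number of pairs $(y,z)$ lying at equal distance from $x$, which is exactly a sum over level sets of the distance-from-$x$ function. The quantity $T^*(X) - |X|^3/p$ measures the deviation of this count from the random/expected value $|X|^3/p$, so I expect the natural framework to be an \emph{incidence} or \emph{energy} estimate where the main term is subtracted off. Concretely, a pair $(y,z)$ with $\|x-y\|=\|x-z\|\neq 0$ and $\|y-z\|\neq 0$ determines a perpendicular bisector of the segment $yz$ passing through $x$; hence the apex $x$ lies on the bisector line of the pair $\{y,z\}$. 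Summing over $x\in X$, the count of non-degenerate isosceles triangles is controlled by the number of incidences between the points of $X$ and the \emph{bisector lines} determined by pairs of points of $X$.

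First I would set up this dictionary precisely: to each unordered pair $\{y,z\}\subset X$ with $\|y-z\|\neq 0$ associate its perpendicular bisector $\ell_{y,z}$, and observe that an isosceles triangle with apex $x$ and base $\{y,z\}$ corresponds to an incidence $x\in\ell_{y,z}$. This converts $T^*(X)$ into a sum over incidences between $|X|$ points and roughly $|X|^2$ bisector lines. The trivial count gives the main term $|X|^3/p$ (each point meets a $1/p$ fraction of lines in a generic direction), so the object of interest $T^*(X)-|X|^3/p$ is precisely the nontrivial incidence surplus. This is exactly the \emph{bisector line energy} studied by Murphy, Petridis, Pham, Rudnev, and Stevens in \cite{MPPRP}, and I would invoke their quantitative bound to control this surplus.

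The second step is to extract the two terms of the stated minimum from two different regimes of the incidence bound. The expression $p^{2/3}|X|^{5/3}+p^{1/4}|X|^2$ should come from a point-line incidence estimate over $\mathbb{F}_p^2$ of Stevens--de Zeeuw / Rudnev type applied to the bisector configuration, where the $p^{1/4}|X|^2$ term reflects the collinearity/clustering contribution and $p^{2/3}|X|^{5/3}$ is the generic incidence bound (valid under the hypothesis $|X|\le p^{4/3}$, which guarantees we are below the threshold where the incidence bound degenerates). The competing bound $|X|^{7/3}$ is a purely combinatorial Cauchy--Schwarz-type estimate on the number of isosceles triangles that is $p$-independent and wins when $|X|$ is small relative to $p$. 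Taking the smaller of the two ranges yields the minimum.

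The hard part will be verifying that the bisector lines are sufficiently ``spread out'' to legitimately apply the sharp incidence bound of \cite{MPPRP} rather than a weaker off-the-shelf estimate, since the relevant improvement over the $d/2$ barrier in Theorem \ref{th:primefields} rests entirely on this refined count. In particular I expect to need to handle degenerate families of bisectors — pencils of concurrent or parallel lines arising when many pairs of $X$ share a common bisector — and to confirm that the main term $|X|^3/p$ is subtracted correctly so that the error terms are genuinely of the stated order. Since the theorem is quoted as a direct consequence of \cite{MPPRP}, the bulk of the work is really the translation in the first step; once the isosceles-triangle count is identified with bisector energy, the estimate follows from the cited result, and I would simply cite it for both terms of the minimum.
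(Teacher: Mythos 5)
The paper gives no internal proof of this statement: it is imported verbatim from Murphy--Petridis--Pham--Rudnev--Stevens \cite{MPPRP}, exactly as you anticipated. Your reduction of the non-degenerate isosceles triangle count to incidences between points of $X$ and perpendicular bisectors of pairs of $X$ is precisely the mechanism underlying the cited result, so your proposal matches the paper's approach (cite \cite{MPPRP}) and correctly identifies where the two terms of the minimum come from.
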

Hence,
\begin{enumerate}
    \item if $|X|\gg p^{5/4}$,  
\[T^*(X)\ll \frac{|X|^3}{p}.\]
\item if $|X|\ll p^{5/4}$, then
\[T^*(X)\ll |X|^{7/3}.\]
\end{enumerate}

Since we assumed that $p\equiv 3\mod 4$, the number of degenerate isosceles triangles is at most $\ll |E|^2$. Hence, 
\begin{enumerate}
    \item if $|E|\gg p^{5/4}$,  
\[|D(E)|\ll \frac{|E|^3}{p}+|E|^2\]
\item if $|E|\ll p^{5/4}$, then
\[|D(E)|\ll |E|^{7/3}.\]
\end{enumerate}
These give us the desired bounds of Theorem \ref{th:primefields}. The first construction tells us that it is impossible to break $\frac{d}{2}$ in even dimensions.
\section{Constructions and Remarks}
We have the following constructions on the sharpness of Theorem \ref{th:main}.
\begin{construction}\label{lem4.1}
Assume $d$ is even, for any $\epsilon>0$, there exists a set $E\subset P_d$ such that $|E|\sim q^{\frac{d}{2}-\epsilon}$ such that $|\prod(E)|=o(q)$. 
\end{construction}
\begin{proof}
We first consider the case $d\equiv 2\mod 4$. We know from Lemma 5.1 in \cite{hart} that there exist $\frac{d-2}{2}$ nonzero vectors $v_1, \ldots, v_{\frac{d-2}{2}}$ in $\mathbb{F}_q^{d-2}$ which are mutually orthogonal, i.e. $v_i\cdot v_j=0$ for all $1\le i\le j\le \frac{d-2}{2}$. Let $S$ be the subspace spanned by these $(d-2)/2$ vectors. Set $E=S\times \{(x, x^2)\colon x\in A\},$
where $A$ is a multiplicative subgroup of $\mathbb{F}_q^*$ of size $q^{1-\epsilon}$. Then one can directly check that 
\[\prod(E)\subset a+a^2,\] for $a \in A$. This shows that $|\prod(E)|\sim q^{1-\epsilon}$ and $|E| \sim q^{\frac{d}{2}-\epsilon}$. 

When $d\equiv 0\mod 4$, we use Lemma 5.1 from \cite{hart} again to obtain $\frac{d}{2}$ vectors that are mutually orthogonal in $\mathbb{F}_q^d$. We denote these vectors by $u_1,\ldots, u_{\frac{d}{2}}$. Let $A$ be a multiplicative subgroup of $\mathbb{F}_q^*$ of size $q^{1-\epsilon}$. We note that $v_{\frac{d}{2}}$ is of the form $(0, \ldots, 0, 1, i)$, where $i^2=-1$. Define
\[S:=\mathbb{F}_qv_1+\cdots+\mathbb{F}_qv_{\frac{d}{2}-1}+Av_{\frac{d}{2}}.\]
Set 
\[E=\{(x_1, \ldots, x_{d-1}, - x_d^2)\colon (x_1, \ldots, x_d)\in S\}.\]
Since $|S|\sim q^{\frac{d}{2}-\epsilon}$, we have $|E|\sim q^{\frac{d}{2}-\epsilon}$. 

For $(x_1, \ldots, x_{d-1}, -x_d^2)$ and $(y_1, \ldots, y_{d-1}, -y_d^2)$ in $E$, we have their product is 
\[x_1y_1+\cdots+x_{d-1}y_{d-1}-x_d^2y_d^2=-x_dy_d-x_d^2y_d^2.\]
So the product value becomes $x+x^2$ for $x\in A$. This implies $|\prod(E)| \sim q^{1-\epsilon}$.
\end{proof}
The next construction provides the information that the best exponent of Theorem \ref{th:main} one can expect is $\frac{d-1}{2}$. 
\begin{construction}
Assume $d\equiv 3\mod 4$  and $q\equiv 3\mod 4$, for any $\epsilon>0$, there exists a set $E\subset P_d$ such that $|E|\sim q^{\frac{d-1}{2}-\epsilon}$ such that $|\prod(E)|=o(q)$. 
\end{construction}
\begin{proof}
Following the first case of Construction \ref{lem4.1}, we may find a subspace $S^{'} \subset \mathbb{F}_q^{d-3}$ of size $q^{\frac{d-3}{2}}$, with the property that any pair of its vectors are mutually orthogonal. Let $S\subset\mathbb{F}_q^{d-2}$ be the set one gets by adjoining $0$ as the last entry of elements of $S^{'}$. Then, by choosing $E$ in the same way as the first case of Construction \ref{lem4.1}, we get $|\prod(E)| \sim q^{1-\epsilon}$ while $|E|\sim q^{\frac{d-1}{2}-\epsilon}$
\end{proof}
\begin{remark}
It is well-known that the $L^2$-norm of the distance problem, i.e. the number of quadruples $(x, y, z, w)\in E^4$ such that $||x-y||=||z-w||$, can be bounded by using extension estimates, see \cite[Theorem 1.7]{KPV} for example. By using the Cauchy-Schwarz inequality, the number of such quadruples is at most $|E|$ times the number of isosceles triangles in $E$. In other words, Theorem \ref{th:isos}, provided in Section $3$, offers a stronger form of this problem.
\end{remark}
\begin{remark}
In the statement of Corollary \ref{cor1.3}, if  $q\equiv 1\mod 4$ then we find that the exponent $\frac{4}{3}$ is not good enough to guarantee that the number of isosceles triangles (including both degenerate and non-degenerate) is at most $\ll |E|^3/q$. This can be seen by taking $E$ to be a set of points on $|E|/M$ parallel lines of slope $i$, where each line contains exactly $M$ points, here $i^2=-1$. So, the number of degenerate isosceles triangles is at least $M^3\cdot \frac{|E|}{M}=M^2|E|$, which is bigger than $|E|^3/q$ if $|E|\le q^{1/2}M$. For example, if $|E|\sim q^{4/3}$, one can take $M=q^{\frac{5}{6}+\epsilon}$ for any $\epsilon>0$.  The same happens for the case of Theorem \ref{th:primefields}. In other words, if one wishes to remove the condition $q\equiv 1\mod 4$, then the best hope with this approach is to show that the inequality (\ref{first-eq}) still holds when replace $D(E)$ by $D^*(E)$, where $D^*(E)$ is the set of triples $(x, y, z)\in D(E)$ with $||\overline{y}-\overline{z}||\ne 0$.
\end{remark}
\begin{remark}\label{rm4.2}
We note that by using a bisector line energy estimate due to Hanson, Lund, and Roche-Newton \cite[Theorem 3]{ben}, the proof of Theorem \ref{th:primefields} also implies Corollary \ref{cor1.3}. However, the method in \cite{ben} is very difficult to extend to higher dimensions. This explains why we need to employ techniques from Fourier extension/restriction theory to prove Theorem \ref{th:main}. 
\end{remark}
Remark \ref{rm4.2} leads us to the following question: 

{\bf Question:}
Is it possible to use results from Fourier extension/restriction theory to get a non-trivial result on bisector hyperplane energy in $\mathbb{F}_q^d$? 

\section{Acknowledgements}
T. Pham would like to thank to the VIASM for the hospitality and for the excellent working condition.

\end{document}